\DeclareMathAlphabet\mathbfcal{OMS}{cmsy}{b}{n}
\pgfplotsset{compat=1.17}
\theoremstyle{plain}
\newtheorem{theorem}{Theorem}[section]
\newtheorem{lemma}[theorem]{Lemma}
\newtheorem{proposition}[theorem]{Proposition}
\theoremstyle{definition}
\newtheorem{example}[theorem]{Example}
\theoremstyle{remark}
\newtheorem{assumption}{Assumption}
\newcolumntype{j}{>{\centering\arraybackslash}p}
\newcolumntype{z}{>{\raggedleft\arraybackslash}p}
\newcolumntype{J}{>{\centering\arraybackslash}X}
\definecolor{Color0}{HTML}{E41A1C}
\definecolor{Color1}{HTML}{377EB8}
\definecolor{Color2}{HTML}{4DAF4A}
\definecolor{Color3}{HTML}{984EA3}
\definecolor{Color4}{HTML}{FF7F00}
\DeclareMathOperator*{\myint}{int}
\DeclareMathOperator*{\argmin}{arg\,min}
\newcommand{\dif}{\mathrm{d}}
\newcommand{\dcocpp}{{\ttfamily dco\kern-.08em{\raisebox{-.1ex}{/}\kern-.15em {c\kern-.03em{\raisebox{-.18ex}{+\kern-.028em{+}}}}}}}
\begin{document}

\title{Numerical simulation of differential-algebraic equations with embedded global optimization criteria}
\author{
\name{J. Deussen\textsuperscript{a}\thanks{Informatik 12: Software and Tools for Computational Engineering, RWTH Aachen University, Germany.}} \\
\and
Jonathan H\"user\footnotemark[1]
\and
Uwe Naumann\footnotemark[1]}
\author{
\name{Jens Deussen\thanks{CONTACT Jens Deussen. Email: deussen@stce.rwth-aachen.de}
and Jonathan Hüser and Uwe Naumann}
\affil{Informatik 12: Software and Tools for Computational Engineering, RWTH Aachen University, Germany}
}
\date{}

\maketitle

\begin{abstract} \small\baselineskip=9pt
We are considering differential-algebraic equations with embedded optimization
criteria (DAEOs) in which the embedded optimization problem is solved by global
optimization.
This actually leads to differential inclusions for cases in which there are
multiple global optimizer at the same time.
Jump events from one global optimum to another result in nonsmooth DAEs
and thus reduction of the convergence order of the numerical integrator to
first-order.
Implementation of event detection and location as introduced in this work
preserves the higher-order convergence behavior of the integrator.
This allows to compute discrete tangents and adjoint sensitivities for optimal
control problems.

\end{abstract}

\section{Introduction}
Solving dynamic models that are described by underdetermined systems of
differential-algebraic equations (DAEs), i.e., where there are more
algebraic states than algebraic equations, often requires to embed an
optimization criterion to find a unique solution.
These models are called DAEs with embedded optimization criteria (DAEOs).

Prime application for DAEOs can be found in process system engineering.
Separation processes with thermodynamic equilibrium between phases
are often modeled with DAEOs.
The DAE describes the dynamic behavior and a nonlinear programs represents
the phase equilibrium which is at the minimum of the Gibbs free energy
\cite{Baker1982,Gopal1999,Sahlodin2016}.

For simulation \cite{Ploch2020, Ploch2022}
and optimization \cite{Cao2003, Hjersted2006, Raghunathan2004}
of DAEOs the nonlinear programs are often substituted with
first-order optimality conditions, i.e., Karush-Kuhn-Tucker (KKT) conditions.
The local approach only guaranties exact solution for convex nonlinear programs.
For nonconvex nonlinear programs, the solution point might only be locally optimal.
Furthermore, substitution of the optimization problem by the KKT conditions yields
a nonsmooth DAE due to switching events, i.e., changes of the active set.
This kind of problem is solved either with a simultaneous or with a sequential
approach \cite{Biegler2010}.
The simultaneous approach results in solving a single large-scale NLP.
In contrast, the sequential approach requires interaction of the numerical
integrator and the numerical optimizer.

In this paper, we propose the simulation of DAEOs with deterministic global
optimization (DGO) methods instead of substituting the optimization problem
with first-order optimality conditions.
This has the advantage that the solution point of the optimization problem
is guaranteed to be a global optimum.
Embedding a global optimization problem requires to consider differential
inclusions as a generalization of differential equations for the cases where
several global optimizer exist.
Among other things, this is also the case when the global optimizer jumps.
A jump of the global optimizer is referred to as an event, which can occur
due to the nature of dynamic systems.
In the presence of an event the resulting DAE system becomes nonsmooth.
Using the local optimum of the previous time step even if the time period
covers an event amounts to a type of discontinuity locking \cite{Park1996}.
Unfortunately, the convergence order of the integrator is reduced to
first-order without explicit treatment of event locations.
To achieve second-order convergence across jumps of the global optimum it is
necessary to implement an adaptive time stepping or event location procedure.
We will describe how to detect and locate events.

An alternative approach for obtaining a higher convergence for the integrator
is presented in \cite{Griewank2018}.
They utilize a generalization of algorithmic differentiation~(AD) to treat
nonsmooth right hand sides of ordinary differential equations (ODEs).
Further information on the theory of AD can be found in
\cite{Griewank2008, Naumann2012} and for information on nonsmooth AD it is
referred to \cite{Griewank2013, Hueser2022}. 

The paper is organized as follows:
In \Cref{sec:TB}, we describe the mathematical formulation of the type of
problem we are considering in this paper.
This includes the description of the DAEO, and assumptions on the DAEO as well
as on the events, i.e., jumps of the global minimizer.
\Cref{sec:GO} gives an overview of interval computations and deterministic
global optimization.
Based on \cite{Deussen2019}, we show how to obtain all convex subdomains of a
nonconvex objective function that potentially contain a local minimum.
The numerical simulation of the DAEO is described in \Cref{sec:NS}.
This includes a description on how to detect whether an event has happened on a
time period (event detection) and how to find the time step at which the event
takes place (event location).
In \Cref{sec:NE} the presented methods are applied to two example functions.
For the first example, we derive an analytical solution to examine the
convergence behavior of the simulation with and without explicit treatment of
the events.
\Cref{sec:C} summarizes the results and gives an outlook on future work.

\section{Theoretical Background}
\label{sec:TB}

We consider the initial value problem for a differential inclusion
with an embedded global optimization problem
\begin{align}
\begin{split}
  x(t_0) &= x_0 \\
  x'(t) &= f(x(t), y(t)) \\
  y(t) &\in \argmin_y h(x(t), y(t)) \quad \forall t \in (t_0, T) \; ,
\end{split}
\label{eq:prob}
\end{align}
where $f:\mathbb R^{n_x}\times\mathbb R^{n_y}$ is the differential part of the
system and $h:\mathbb R^{n_x}\times\mathbb R^{n_y}$ is the objective function.
We use the notation $x'= \partial x/\partial t$ for the derivative with respect
to time, $\partial_x=\partial /\partial x$ for the (partial) derivative and
$\dif_x = \dif / \dif x$ for the total derivative with respect to $x$ where it
is crucial to distinguish from the partial derivative in the context of
implicit differentiation.
Second partial derivatives are denoted by
$\partial^2_{yx}=\partial^2 / \partial y\partial x$.
The problem in \cref{eq:prob} is a differential inclusion instead of an
differential equation problem because the optimum of the embedded
optimization problem is not necessarily unique.
In this paper we consider the case where the solution set consists of
a finite set of isolated strict local optima that are also potential global optima
\begin{equation*}
  \argmin_y h(x, y) = \bigcup_{i = 1}^S \{ y^i \}\;,
\end{equation*}
with $h(y^i) = h(y^j)$ for all $i, j \in \{1, \dots, S\}$ and
$h(y^i) < h(z)$ for all
$i \in \{1, \dots, S\}$, $z \in \mathbb R^{n_y} \setminus \argmin_y h(x, y)$.
Notably, that also means that the implicit set-valued map $y(t) = y(x(t))$ is
not convex.

\begin{assumption}\label{assump:Lipschitz}
  The functions $f$ and $h$ are twice Lipschitz continuously differentiable
  with respect to $(x, y)$.
\end{assumption}
For each strict local optimum we have necessary and sufficient local optimality
conditions
\begin{align*}
  0 &= \partial_y h(x, y) \\
  0 &\prec \partial^2_{yy} h (x, y) \; .
\end{align*}
The gradient with regard to $y$ is zero and the Hessian is positive definite.
\begin{assumption}
  The initial value problem is posed in such a way that the necessary and
  sufficient local optimality conditions hold for $(x(t), y^i(t))$ for all
  $i \in \{1, \dots, S\}, t \in [t_0, T]$.
\end{assumption}
In order to turn the differential inclusion into a discontinuous differential
equation with a unique solution we consider the case where the solution set has
size larger than one only for a set of times that has measure zero.
We make the even stronger assumptions that the set of times where the solution
set is larger than one is finite in order to make handling these events
numerically feasible.
\begin{assumption} \label{ref:assump}
  There exists a finite set of events $0 < t^{e_1} < \dots < t^{e_K} < T$ with
  \[\left| \argmin_y h(x(t^{e_j}), y(t^{e_j})) \right| = 2\;,\]
  for all $j \in \{1, \dots K\}$ and we have
  \[\left| \argmin_y h(x(t), y(t)) \right| = 1\;,\]
  for all $t \in [t_0, T]$ with $t \not= t^{e_j}$
  for some $j \in \{1, \dots K\}$.
\end{assumption}
With the previous assumption we can have two cases for each $t^{e_j}$.

\begin{figure*}[tb]
  \centering
\begin{tikzpicture}[declare function=
  {h(\x,\t)= (1.0-\x*\x)*(1.0-\x*\x) - (\t-0.5)*sin(deg(\x*pi/2.0))+0.75;} ]
\begin{axis}[width=0.32\linewidth, height=0.22\linewidth,
xmin=-1.5, xmax=1.5, ymin=0, ymax=3, ylabel={(i)},
ylabel style={rotate=-90},
xticklabels={}, extra x tick labels={$y^1$,$y^2$}, extra x ticks={-1,1},
extra x tick style={grid=major,ticklabel pos=upper},
tick style={draw=none}, yticklabels={}]
\addplot[color=black, domain=-2:2, samples=200, smooth] (x, {h(x,0)});
\end{axis}
\begin{scope}[xshift=0.24\linewidth]
\begin{axis}[width=0.32\linewidth, height=0.22\linewidth,
xmin=-1.5, xmax=1.5, ymin=0, ymax=3,
xticklabels={}, extra x tick labels={$y^1$,$y^2$}, extra x ticks={-1,1},
extra x tick style={grid=major,ticklabel pos=upper},
tick style={draw=none}, yticklabels={}]
\addplot[color=black, domain=-2:2, samples=200, smooth] (x, {h(x,0.5)});
\end{axis}
\end{scope}
\begin{scope}[xshift=0.48\linewidth]
\begin{axis}[width=0.32\linewidth, height=0.22\linewidth,
xmin=-1.5, xmax=1.5, ymin=0, ymax=3,
xticklabels={}, extra x tick labels={$y^1$,$y^2$}, extra x ticks={-1,1},
extra x tick style={grid=major,ticklabel pos=upper},
tick style={draw=none}, yticklabels={}]
\addplot[color=black, domain=-2:2, samples=200, smooth] (x, {h(x,1)});
\end{axis}
\end{scope}
\begin{scope}[yshift=-0.14\linewidth]
\begin{axis}[width=0.32\linewidth, height=0.22\linewidth,
xmin=-1.5, xmax=1.5, xlabel=$y(t^{e_j}-)$,
x label style={yshift=0.2cm},
ymin=0, ymax=3, ylabel={(ii)},
ylabel style={rotate=-90},
xticklabels={}, extra x tick labels={}, extra x ticks={-1,1},
extra x tick style={grid=major},
tick style={draw=none}, yticklabels={}
]
\addplot[color=black, domain=-2:2, samples=200, smooth] (x, {h(x,0)});
\end{axis}
\end{scope}
\begin{scope}[xshift=0.24\linewidth,yshift=-0.14\linewidth]
\begin{axis}[width=0.32\linewidth, height=0.22\linewidth,
xmin=-1.5, xmax=1.5, xlabel=$y(t^{e_j})$,
x label style={yshift=0.2cm},
ymin=0, ymax=3,
xticklabels={}, extra x tick labels={}, extra x ticks={-1,1},
extra x tick style={grid=major},
tick style={draw=none}, yticklabels={}]
\addplot[color=black, domain=-2:2, samples=200, smooth] (x, {h(x,0.5)});
\end{axis}
\end{scope}
\begin{scope}[xshift=0.48\linewidth,yshift=-0.14\linewidth]
\begin{axis}[width=0.32\linewidth, height=0.22\linewidth,
xmin=-1.5, xmax=1.5, xlabel=$y(t^{e_j}+)$,
x label style={yshift=0.2cm},
ymin=0, ymax=3,
xticklabels={}, extra x tick labels={}, extra x ticks={-1,1},
extra x tick style={grid=major},
tick style={draw=none}, yticklabels={}]
\addplot[color=black, samples=200, smooth] (x, {h(x,0)});
\end{axis}
\end{scope}
\end{tikzpicture}
\caption{Cases resulting from \Cref{ref:assump}.}
\label{fig:assump3}
\end{figure*}
\begin{itemize}
  \item[(i)] The switch from one global optimize to another:
    $$\argmin_y h(x(t^{e_j}), y(t^{e_j})) = \{ y^1(t^{e_j}), y^2(t^{e_j}) \}\;,$$
    and
    \begin{align*}
      \lim_{t \rightarrow t^{e_j}-} \argmin_y h(x(t), y(t)) &= \{ y^1(t^{e_j}) \}\;, \\
      \lim_{t \rightarrow t^{e_j}+} \argmin_y h(x(t), y(t)) &= \{ y^2(t^{e_j}) \} \; .
    \end{align*}
    See \Cref{fig:assump3} (top) for a visualization.

  \item[(ii)] The touching of a second local optimum:
    $$\argmin_y h(x(t^{e_j}), y(t^{e_j})) = \{ y^1(t^{e_j}), y^2(t^{e_j}) \}\;,$$
    and
    \begin{align*}
      \lim_{t \rightarrow t^{e_j}-} \argmin_y h(x(t), y(t)) &= \{ y^1(t^{e_j}) \} \;,\\
      \lim_{t \rightarrow t^{e_j}+} \argmin_y h(x(t), y(t)) &= \{ y^1(t^{e_j}) \} \; .
    \end{align*}
    See \Cref{fig:assump3} (bottom) for a visualization.
\end{itemize}
The second case is not relevant for the solution of the differential equation
because it occurs on a set of measure zero.
We make a transversality assumption that precludes the second case.
First, we define the condition of the touching two local optima $y^1$ and $y^2$
as the root of the event function $H(x(t))$
\begin{equation*}
  0 = h(x(t), y^1(x(t))) - h(x(t), y^2(x(t))) = H(t) \; .
\end{equation*}
\begin{assumption}
  All events are transversal:
  Assuming that $y(x(t^{e_j})) = y^1(x(t^{e_j}-))$ we have
  \begin{align*}
    \partial_x H(x(t^{e_j})) f(x(t), y^1(t^{e_j})) &> 0 \\
    \partial_x H(x(t^{e_j})) f(x(t), y^2(t^{e_j})) &> 0 \quad j \in \{1, \dots, K\} \; .
  \end{align*}
\end{assumption}
With the above assumptions we can write the original differential inclusion as in
\cref{eq:prob}
as multiple initial value problems for differential equations for
$j = 0, \dots, K+1$:
\begin{align}
\begin{split}
  x^j(t^{e_j}) &= \begin{cases}
    x_0 & \text{if } j=0\\
    x^{j-1}(t^{e_j}) & \text{otherwise}
  \end{cases} \\
  (x^j)'(t) &= f(x^j(t), y^j(t)) \\
  \{ y^j(t) \} &= \argmin_y h(x^j(t), y^j(t))
  \quad t \in (t^{e_j}, t^{e_{j+1}}) \; ,
  \end{split}
  \label{eq:phases}
\end{align}
where $t^{0} = t_0$ and $t^{K+1} = T$.
The time periods $\left(t^{e_j},t^{e_{j+1}}\right)$, $j=0,\ldots,K$,
will be referred to as phases.

\begin{proposition}
  \Cref{eq:phases} is a DAEO for each phase because $y^j(t)$ is a
  locally unique implicit function of $x(t)$ that is also Lipschitz continuous
  in $x(t)$.
\end{proposition}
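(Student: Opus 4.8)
The plan is to obtain $y^j$ as an implicit function from the first‑order stationarity condition by means of the implicit function theorem, and then to read Lipschitz continuity off the resulting derivative formula. Set $G(x,y) := \partial_y h(x,y)$, so that along the $j$-th phase the pair $(x^j(t), y^j(t))$ satisfies $G(x^j(t), y^j(t)) = 0$ together with the strict second‑order condition $0 \prec \partial^2_{yy} h(x^j(t), y^j(t))$.

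First I would verify the hypotheses of the implicit function theorem. By \Cref{assump:Lipschitz} the map $G = \partial_y h$ is continuously differentiable with (locally) Lipschitz derivatives, and $\partial_y G = \partial^2_{yy} h$ is the Hessian of $h(x,\cdot)$, which by the sufficient second‑order optimality condition is symmetric positive definite, hence nonsingular, at every point of the phase. The implicit function theorem therefore gives, for each $t \in [t^{e_j}, t^{e_{j+1}}]$, an open neighborhood $U_t$ of $x^j(t)$ and a unique continuously differentiable branch $y^j : U_t \to \mathbb R^{n_y}$ with $y^j(x^j(t)) = y^j(t)$ and $G(x, y^j(x)) = 0$ on $U_t$, with derivative $\dif_x y^j = -\left(\partial^2_{yy} h\right)^{-1} \partial^2_{yx} h$. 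This is the local uniqueness statement. Shrinking $U_t$ if needed, continuity of $x \mapsto \partial^2_{yy} h(x, y^j(x))$ and openness of the positive‑definite cone make $y^j(x)$ a strict local minimizer of $h(x,\cdot)$ for all $x \in U_t$; since, by \Cref{ref:assump}, the argmin set is a singleton throughout the open interval $(t^{e_j}, t^{e_{j+1}})$, the global minimizer there must coincide with this branch, so $y^j(\cdot)$ is exactly the algebraic solution component of \cref{eq:phases}.

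For Lipschitz continuity I would cover the compact trajectory piece $\{\, x^j(t) : t \in [t^{e_j}, t^{e_{j+1}}] \,\}$ by finitely many $U_t$, use compactness and continuity to get a uniform lower bound on the smallest eigenvalue of $\partial^2_{yy} h$ and a uniform bound on $\partial^2_{yx} h$ (via \Cref{assump:Lipschitz}), and hence a uniform bound on $\|\left(\partial^2_{yy} h\right)^{-1} \partial^2_{yx} h\|$; the mean value inequality then yields a single Lipschitz constant for $y^j$ on a tube around the trajectory. Together with the fact that the algebraic equation $0 = \partial_y h(x,y)$ has a nonsingular partial Jacobian in $y$ — the structural, index‑one condition that distinguishes \cref{eq:phases} from an unconstrained ODE — this establishes that each phase is a well‑posed DAEO.

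I expect the main subtlety to be not a single estimate but the logical ordering needed to avoid circularity: $y^j$ as an implicit function of $x$ is a property of $h$ alone and must be established first; only then is the right‑hand side $f(\cdot, y^j(\cdot))$ known to be Lipschitz (by \Cref{assump:Lipschitz} and composition of Lipschitz maps), so that Picard--Lindel\"of provides a unique trajectory $x^j(t)$ and $y^j(t) := y^j(x^j(t))$ becomes well defined and Lipschitz in $t$. A secondary care point is upgrading the purely local neighborhoods returned by the implicit function theorem to a statement uniform along the whole phase, which is exactly where compactness of the trajectory's closure enters.
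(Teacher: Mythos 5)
Your proposal follows essentially the same route as the paper: apply the implicit function theorem to the stationarity condition $0=\partial_y h(x,y)$, using positive definiteness of $\partial^2_{yy}h$ for invertibility, and read Lipschitz continuity off the bounded derivative $\dif_x y^j=-\left(\partial^2_{yy}h\right)^{-1}\partial^2_{yx}h$. Your write-up is more careful than the paper's (which stops at the derivative formula, and in fact drops the minus sign), particularly in using compactness of the trajectory to obtain a uniform Lipschitz constant along the whole phase.
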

\begin{proof}
  The first-order optimality condition that we assumed to hold for all local
  optima is
  \begin{equation*}
    0 = \partial_y h(x, y) \; .
  \end{equation*}
  By the implicit function theorem and using the positive definiteness of the
  Hessian we have
  \begin{align*}
    \dif_x \partial_y h(x, y) &= \partial^2_{yy}h(x, y) \partial_x y(x)
    + \partial^2_{yx}h(x, y) = 0 \\
    \Rightarrow \quad \partial_x y(x)
    &= \partial^2_{yy}h(x, y)^{-1} \partial^2_{yx} h(x, y) \; .
  \end{align*}
\end{proof}

Because $y$ is only given implicitly, the above problems are really DAEs.
We get the DAE formulation by linearizing the first-order optimality condition:
\begin{align}
\begin{split}
  x^j(t^{e_j}) &= \begin{cases}
    x_0 & \text{if } j=0\\
    x^{j-1}(t^{e_j}) & \text{otherwise}
  \end{cases} \\
  (x^j)'(t) &= f(x^j(t), y^j(t)) \\
  0 &= \partial_y h(x^j(t), y^j(t)) \quad t \in (t^{e_j}, t^{e_{j+1}}) \; .
  \end{split}
  \label{eq:prob2}
\end{align}

\section{Global search for local optima}
\label{sec:GO}

Interval arithmetic (IA) \cite{Moore2009} evaluations have the property that
all values that can be evaluated on a given domain are reliably contained in
the output of the corresponding interval evaluation.
To obtain global information on the function value a single function
evaluation in IA is required instead of multiple function evaluations at
several points.

For (compact) interval variable $[y]$ we use the notation
\begin{align*}
\left[y\right] = \left[\underline{y},\overline{y}\right]
= \left\{y\in \mathbb{R}^{n_y}\ |\ \underline{y}_i\leq y_i \leq \overline{y}_i
\,,\ i=1,\ldots,n_y\right\} \; ,
\end{align*}
with lower and upper bound $\underline{y}, \overline{y}\in \mathbb R^{n_y}$.
The united extension~(UE) of a function $h$ evaluated on $\left[y\right]$ is defined
as
\begin{align*}
h^\ast\left(\left[y\right]\right) = \left\{h(y) 
\in \mathbb{R}^{n_g}\ |\ y\in\left[y\right]\right\} \; .
\end{align*}
The UE for algebraic operators and elemental functions,
i.e., general power, general root, exponential, logarithmic, trigonometric and
hyperbolic functions, on compact domains are well known.
However, this does not apply to composite functions.
To enable IA of composite functions, the natural interval extension~(NIE) 
replaces all algebraic operators and elemental functions in the evaluation
procedure by their UE.
The evaluation of function $h$ on $\left[y\right]$ by the NIE yields
\begin{align*}
h\left(\left[y\right]\right) \supseteq h^\ast\left(\left[y\right]\right) \; .
\end{align*}
The superset states that the resulting interval can be an overestimation of the
UE.

Overestimation can occur if the underlying data format
(e.g., floating-point numbers)
cannot represent the exact bounds of a computed interval.
In this particular case the IA evaluation rounds towards negative or positive
infinity for a lower or upper bound, respectively.
Furthermore, overestimation can be caused by the dependency problem.
If a function evaluation uses a variable multiple times, IA does not take into
account that actual values taken from these intervals are equal.
The larger the intervals are the more significant the overestimation is.
Another challenge for IA are conditional branches that depend on interval
arguments.
Comparisons of intervals that intersect are ambiguous.
Splitting or multi-section \cite{Hansen2004} of the original domain and
evaluation in IA on subdomains might address this problem.
While the NIE converges linearly to the UE \cite{Moore2009}, so called
mean-value forms \cite{Moore1966} converge quadratically.
An alternative approach to obtain global information on the function value are
McCormick relaxations \cite{McCormick1976,Mitsos2009}.
These relaxations converge quadratically to the convex hull, from which the UE
can be determined.

To obtain guaranteed ranges for the derivatives on a given domain, the NIE can be
applied to derivative computations, e.g., by AD.
In \cite{Deussen2021} convergence of the interval methods applied to AD models is
shown and cases are investigated for which the NIE of the AD models yield the UE.
We will denote the interval gradient with
\begin{align*}
\left[\partial_y h\left(x,\left[y\right]\right)\right] \; ,
\end{align*}
and the interval Hessian with
\begin{align*}
\left[\partial^2_{yy}h\left(x,\left[y\right]\right)\right] \; .
\end{align*}
This information can be used to exclude that the necessary condition is
violated and to verify if the sufficient condition is fulfilled.

In \cite{Deussen2020}, it is shown how to sharpen the bounds of the
enclosure of the gradient by using McCormick relaxations instead of the NIE.
The nonsmooth McCormick relaxations are abs-factorable such that we can apply
piecewise linearization as suggested in\cite{Griewank2013}.
Thus, for computing the optima of the McCormick relaxations we use successive
piecewise linearization which is an optimization method that repeatedly
utilizes piecewise linearization models in a proximal point type method
\cite{Griewank2016}.

A divide and conquer algorithm can be utilized to detect all convex subdomains
$\hat B(\left[y\right])$ that potentially contain a local minimum of $h(x,y)$
\begin{align*}
\hat B(\left[y\right]) = \left\{\left[y\right]\ |\ 
  0 \in \left [\partial_y h\left(x,\left[y\right]\right)\right]\wedge
  0\prec\left[\partial^2_{yy} h\left(x,\left[y\right]\right)\right]\right\}\; .
\end{align*}
The existence of these boxes is shown by \Cref{prop:convexboxes}.
\begin{lemma} \label{lem:spectralnorm}
  The spectral norm of a symmetric positive definite matrix $A$ is its
  maximum eigenvalue.
\end{lemma}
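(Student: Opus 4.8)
The plan is to combine the spectral theorem for real symmetric matrices with the variational (operator-norm) characterization of the spectral norm. Recall that $\|A\|_2 = \sup_{\|v\|_2 = 1} \|A v\|_2$, and that a symmetric $A \in \mathbb{R}^{n_y \times n_y}$ admits an orthonormal eigenbasis $q_1, \dots, q_{n_y}$ with real eigenvalues $\lambda_1, \dots, \lambda_{n_y}$. Since $A$ is positive definite, all eigenvalues are strictly positive; I would order them as $\lambda_{\max} = \lambda_1 \ge \dots \ge \lambda_{n_y} > 0$.

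First I would expand an arbitrary unit vector $v$ in the eigenbasis, $v = \sum_{i} c_i q_i$, so that $\|v\|_2^2 = \sum_i c_i^2 = 1$ and $A v = \sum_i \lambda_i c_i q_i$. Orthonormality then gives $\|A v\|_2^2 = \sum_i \lambda_i^2 c_i^2 \le \lambda_{\max}^2 \sum_i c_i^2 = \lambda_{\max}^2$, hence $\|A\|_2 \le \lambda_{\max}$. For the reverse inequality I would take $v = q_1$, for which $\|A q_1\|_2 = \|\lambda_{\max} q_1\|_2 = \lambda_{\max}$ since $\lambda_{\max} > 0$; this shows $\|A\|_2 \ge \lambda_{\max}$, and the two bounds together give $\|A\|_2 = \lambda_{\max}$. (Equivalently, one could argue $\|A\|_2 = \sqrt{\lambda_{\max}(A^\top A)} = \sqrt{\lambda_{\max}(A^2)} = \sqrt{\max_i \lambda_i^2}$ and then use positivity of the spectrum.)

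There is essentially no hard step here; the lemma is a textbook consequence of the spectral theorem. The only point that genuinely uses positive definiteness rather than mere symmetry is the final one: for a general symmetric matrix the supremum equals $\max_i |\lambda_i|$, which may be attained at a negative eigenvalue, whereas positivity of the spectrum ensures $\max_i |\lambda_i| = \lambda_{\max}$ and that the maximizing eigenvector realizes the norm with no sign issue. I would state this explicitly so that the hypothesis is visibly used.
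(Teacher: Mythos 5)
Your proof is correct, and it takes a mildly different route from the paper's. The paper's proof is a one-line identity chain: $\| A \| = \sqrt{\lambda_{\max}(A^T A)} = \sqrt{\lambda_{\max}(A^2)} = \sqrt{\lambda_{\max}(A)^2} = \lambda_{\max}(A)$, i.e.\ it starts from the singular-value characterization of the spectral norm and uses symmetry to replace $A^TA$ by $A^2$. You instead work directly from the variational definition $\sup_{\|v\|_2=1}\|Av\|_2$ and the spectral theorem, proving both inequalities by expanding in the orthonormal eigenbasis; you only mention the paper's chain parenthetically. The two arguments are logically equivalent, but yours is more self-contained (it does not take $\|A\|=\sqrt{\lambda_{\max}(A^TA)}$ as a black box), and you make explicit the one step where positive definiteness is genuinely needed: for a merely symmetric matrix the norm is $\max_i|\lambda_i|$, and positivity of the spectrum is what lets one pass from $\max_i \lambda_i^2$ to $(\max_i\lambda_i)^2$. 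The paper's step $\lambda_{\max}(A^2)=\lambda_{\max}(A)^2$ uses exactly this fact silently, so your explicit remark is a small but real improvement in rigor; the paper's version buys brevity.
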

\begin{proof}
  \begin{align*}
    \| A \| = \sqrt{ \lambda_{\max}(A^T A) }
    = \sqrt{ \lambda_{\max}(A^2) }
    = \sqrt{ \lambda_{\max}(A)^2 }
    = \lambda_{\max}(A) \; .
  \end{align*}
\end{proof}

\begin{proposition} \label{prop:convexboxes}
  For every local optimum $y^\diamond \in \argmin_y h(x, y)$ there is a closed box
  $$B(a,b)=\{ y\;|\; a_i\le y_i-y^\diamond_i \le b_i, i\in\{1,\dots,n_y\}\}$$
  with $y^\diamond \in \myint(B(a, b))$ such that
  $$\partial^2_{yy}h(x, y) \succcurlyeq \delta$$
  for $\delta > 0$ for all $y \in B(a, b)$.
\end{proposition}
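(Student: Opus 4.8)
The plan is to exploit the continuity of the second derivative $\partial^2_{yy}h$ together with the sufficient optimality condition $\partial^2_{yy}h(x,y^\diamond)\succ 0$, and then pick the box small enough that the Hessian stays uniformly positive definite on it. First I would set $\mu := \lambda_{\min}\bigl(\partial^2_{yy}h(x,y^\diamond)\bigr) > 0$, which is strictly positive precisely because $y^\diamond$ is a strict local optimum satisfying the sufficient second-order condition assumed in the problem. The goal is then to choose $\delta := \mu/2$ and find a box around $y^\diamond$ on which $\partial^2_{yy}h(x,y)\succcurlyeq \delta I$.

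Next I would use \Cref{assump:Lipschitz}: since $h$ is twice Lipschitz continuously differentiable in $(x,y)$, the map $y\mapsto \partial^2_{yy}h(x,y)$ is Lipschitz with some constant $L$ (in the spectral norm, say). For any $y$ with $\|y-y^\diamond\|\le r$ we then have $\|\partial^2_{yy}h(x,y)-\partial^2_{yy}h(x,y^\diamond)\|\le L r$. By Weyl's inequality (or directly from the Rayleigh quotient characterization), the eigenvalues of $\partial^2_{yy}h(x,y)$ differ from those of $\partial^2_{yy}h(x,y^\diamond)$ by at most $Lr$, so $\lambda_{\min}\bigl(\partial^2_{yy}h(x,y)\bigr)\ge \mu - Lr$. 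Choosing $r := \mu/(2L)$ (and $r$ arbitrary if $L=0$) gives $\lambda_{\min}\bigl(\partial^2_{yy}h(x,y)\bigr)\ge \mu/2 = \delta > 0$ for all $y$ in the closed Euclidean ball of radius $r$ around $y^\diamond$. Here \Cref{lem:spectralnorm} is the clean way to connect the spectral norm bound to the eigenvalue shift, since for the symmetric matrices involved the spectral norm is exactly the largest-magnitude eigenvalue of the difference.

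Finally I would convert the ball into a box: simply take $a_i = -r/\sqrt{n_y}$ and $b_i = +r/\sqrt{n_y}$ for every $i$, so that $B(a,b)\subseteq \{y : \|y-y^\diamond\|\le r\}$, and note $y^\diamond\in\myint(B(a,b))$ since $a_i<0<b_i$. On this box $\partial^2_{yy}h(x,y)\succcurlyeq \delta I$ holds by the previous step, which is the claimed inequality (reading $\succcurlyeq \delta$ as $\succcurlyeq \delta I$).

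The only mildly delicate point is making sure the eigenvalue perturbation argument is invoked correctly — i.e., that a spectral-norm bound $\|A-A_0\|\le\varepsilon$ on symmetric matrices really does imply $\lambda_{\min}(A)\ge\lambda_{\min}(A_0)-\varepsilon$. This is standard (Weyl, or the min-max principle: $\lambda_{\min}(A) = \min_{\|v\|=1} v^T A v \ge \min_{\|v\|=1}\bigl(v^T A_0 v - \|A-A_0\|\bigr) = \lambda_{\min}(A_0)-\|A-A_0\|$), so there is no real obstacle; the rest is bookkeeping on constants and the ball-to-box inclusion.
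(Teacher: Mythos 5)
Your proof is correct and follows essentially the same strategy as the paper: use the Lipschitz continuity of $\partial^2_{yy}h$ from \Cref{assump:Lipschitz} to control the drop in the minimum eigenvalue away from $y^\diamond$, then shrink the box until that drop is at most $\mu-\delta$. The only real difference is the eigenvalue-perturbation step: you invoke Weyl's inequality (equivalently the min--max bound $\lambda_{\min}(A)\ge\lambda_{\min}(A_0)-\|A-A_0\|$) directly, whereas the paper reaches the same inequality by a longer detour through the shifted matrix $\partial^2_{yy}h(x,y)-ZI$ and \Cref{lem:spectralnorm}, which first requires bounding $\lambda_{\max}$ over the box; your route is cleaner and equally rigorous.
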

\begin{proof}
  By \Cref{assump:Lipschitz} the Hessian is Lipschitz continuous on
  the box.
  Using \Cref{lem:spectralnorm} and the triangle inequality we get
  \begin{align*}
    \big| \lambda_\text{max}(\partial^2_{yy}h(x, y)) - \lambda_\text{max}(\partial^2_{yy}h(x, y^\diamond)) \big|
    &= \big| \| \partial^2_{yy}h(x, y) \| - \| \partial^2_{yy}h(x, y^\diamond) \| \big| \\
    &\le \| \partial^2_{yy}h(x, y) - \partial^2_{yy}h(x, y^\diamond) \| \\
    &\le L \| y - y^\diamond \|_2
  \end{align*}
  for some $L > 0$.
  By equivalence of norms in finite dimensional vector spaces we have
  \begin{equation*}
    \| y - y^\diamond \|_2 \le C \| y - y^\diamond \|_\infty \le C \max_{i \in \{1, \dots, n_y\}} \; b_i - a_i
  \end{equation*}
  for some $C > 0$.
  It follows that for any box $B(a, b)$ we can find a bound $Z > 0$ such
  that
  \begin{equation*}
    \lambda_\text{max}(\partial^2_{yy}h(x, y)) \le \lambda_\text{max}(\partial^2_{yy}h(x, y^\diamond)) + Y = Z
  \end{equation*}
  for all $y \in B(a, b)$.
  Let $I$ be the $n_y \times n_y$ identity matrix.
  The shifted matrix
  $$\partial^2_{yy}h(x, y) - Z I$$
  has the dominant eigenvalue $\lambda_\text{min}(\partial^2_{yy}h(x, y)) - Z$
  for all $y \in B(a, b)$.
  Using \Cref{lem:spectralnorm}, the triangle inequality, strong convexity
  of the Hessian and Lipschitz continuity (\Cref{assump:Lipschitz}) we get
  \begin{align*}
    \lambda_\text{min}(\partial^2_{yy}h(x, y))
    &= Z - \| \partial^2_{yy}h(x, y) - Z I \| \\
    &= Z - \| \partial^2_{yy}h(x, y^\diamond) 
    + (\partial^2_{yy}h(x, y) - \partial^2_{yy}h(x, y^\diamond)) - Z I \| \\
    &\ge Z - \| \partial^2_{yy}h(x, y^\diamond) - Z I \|
     - \| \partial^2_{yy}h(x, y) - \partial^2_{yy}h(x, y^\diamond) \| \\
    &\ge Z - \| \partial^2_{yy}h(x, y^\diamond) - Z I \| - L \| y - y^\diamond \| \\
    &= \lambda_\text{min}(\partial^2_{yy}h(x, y^\diamond)) - L \| y - y^\diamond \| \\
    &\ge \mu - L \| y - y^\diamond \| \\
    &\ge \mu - L C \max_{i \in \{1, \dots, n_y\}} \; b_i - a_i
  \end{align*}
  for all $y \in B(a, b)$.
  We choose $b_i > 0$ and $a_i < 0$ then $y^\diamond \in \myint(B(a, b))$
  and we choose them such that
  \begin{equation*}
    0 < b_i - a_i \le \frac{\mu - \delta}{L C}
  \end{equation*}
  then
  $\mu - L C (b_i - a_i) \ge \delta$
  for all $i \in \{1, \dots, n_y\}$.
\end{proof}

The divide and conquer algorithm recursively refines the domain until
a subdomain can either be eliminated due to violation of the optimality
condition or be returned due to approval of these conditions.
The union of the returned and the eliminated subdomains should cover
the original domain.
The processing of a subdomain $B$ consists of the following main components:
\begin{itemize}
\item \emph{Eliminating}: Discard $B$, if
$$0\notin \left[\partial_y h\left(x,\left[y\right]\right)\right] \vee
0\nprec\left[\partial^2_{yy} h\left(x,\left[y\right]\right)\right]\; .$$
\item \emph{Terminating}: Append $\hat{B}([y])$ to $\mathcal B(t)$, if
$$0\in \left[\partial_y h\left(x,\left[y\right]\right)\right] \wedge
0\prec\left[\partial^2_{yy} h\left(x,\left[y\right]\right)\right]\; .$$
\item \emph{Branching}: Split the current domain $B$ into subdomains $B_j$
with $B = \bigcup_j B_j$.
\end{itemize}
Performing local searches on the convex subdomains stored in
$\mathcal B(t)$ results in the set $\mathcal S(t)$ that contains all local
minima $y^\diamond$.
Some of these subdomains might not contain a local minimum due to the
already mentioned overestimation of IA.
In that case the local search would find the minimum on the bound of
the subdomain and would not include this into the set $\mathcal S(t)$.

To reduce the computational effort of the complete search algorithm, it might
be desirable to implement a bounding step into the algorithm as it is done by
conventional branch-and-bound algorithms as implemented in DGO solver, e.g.,
MAiNGO \cite{Bongartz2018} or BARON \cite{Khajavirad2018}.
Instead of finding all local optima $y^\diamond$, one would focus on those
local optima that are close to the global optimum $y^\ast$.
These local optima would fulfill
$$h(x,y^\diamond) \leq h(x,y^\ast) + \alpha\; ,$$
for $\alpha>0$.
That would lead to the additional steps in the processing of subdomain $B$:
\begin{itemize}
\item \emph{Bounding}: Compute a guaranteed lower bound $\underline{h}$ of a
convex relaxation of the objective function on $B$.
\item \emph{Eliminating}: Discard $B$, if 
$$\underline{h} > \overline{h}^\ast + \alpha\; .$$
\item \emph{Pruning}: evaluate the function at any feasible point $y\in B$, e.g.,
the midpoint $y^\dagger$ of $B$, and update the current best solution
$\overline{h}^\ast$, if $h(x, y^\dagger)<\overline{h}^\ast$.
\end{itemize}

\section{Numerical simulation}
\label{sec:NS}

In this section, we present how to solve \cref{eq:prob2} numerically with
a temporal discretization for the numerical integrator
and how to detect and locate events at which the global optimizer switches.

\subsection{Time stepping}
We select an implicit scheme for the time stepping of the simulation
of the dynamic system in order to be stable and a higher-order
scheme in order to be more efficient in terms of step size.
The trapezoidal rule, a second-order convergent Runge-Kutta method,
computes
$$x(t_{k+1}) = x(t_{k}) + \frac{\Delta t_k}2 \left(f\left(x(t_{k+1}),y(t_{k+1})\right)+f\left(x(t_{k}),y(t_{k})\right)\right)\; ,$$
for discrete time steps $t_{k}$ and $t_{k+1}$,
with $\Delta t_k = t_{k+1}-t_{k}$ such that we obtain the discretized system
\begin{align}
\begin{split}
  x^j(t^{e_j}) &= \begin{cases}
    x_0 & \text{if } j=0\\
    x^{j-1}(t^{e_j}) & \text{otherwise}
  \end{cases} \\
  x^j(t_{k+1}) &= x^j(t_{k}) + \frac{\Delta t_k}2 \left(f\left(x^j(t_{k+1}),y^j(t_{k+1})\right)+f\left(x^j(t_{k}),y^j(t_{k})\right)\right)\\
  0 &= \partial_y h(x^j(t_{k+1}), y^j(t_{k+1})) \quad t_k,t_{k+1} \in (t^{e_j}, t^{e_{j+1}}) \; .
  \end{split}
  \label{eq:prob3}
\end{align}
The system in \cref{eq:prob3} can be solved by a linear solver, e.g., with LU
decomposition.
Since the events $t^{e_j}$, $j\in\{1,\ldots,K\}$, are not known apriori, we need a
need a mechanism for event detection.

\subsection{Event detection}

Let us assume we track the dynamic set of multiple strong local optima
\begin{equation*}
  \mathcal S(t) = \{ y^1(t), \dots, y^S(t) \} \; .
\end{equation*}
No local optimum emerges or vanishes between $t_k$ and $t_{k+1}$, such that we
have $|\mathcal S(t_k)| = |\mathcal S(t_{k+1})| = S$.
Furthermore, we assume that there is an event at $t^e$.
The unique global optimum for $t_k \le t < t^e$ is $y^1(t)$.
We have $$h(y^1(t)) < h(y^s(t))$$ for all
$s \in \{ 1, \dots, S \} \setminus \{ 1 \}$, $t \in [t_k, t^e)$.
An event occurs at $t^e$ when without loss of generality
$h(y^2(t^e)) = h(y^1(t^e))$.
Due to transversality there exists a $t_{k+1} > t^e$ such that
$$h(y^2(t)) < h(y^s(t))$$ for all
$s \in \{ 1, \dots, S \} \setminus \{ 2 \}$, $t \in (t^e, t_{k+1}]$.
Note, that we keep the superscripts of the local optima from $t_k$ for
$t\in[t_k,t_{k+1}]$.

The question of event detection in the discrete time setting is the following:
Looking only at randomly ordered list representations of $\mathcal S(t_k)$ and
$\mathcal S(t_{k+1})$ that are obtained by a (global) optimization algorithm,
has a switch of the global optimum happened in the meantime?

We are assuming that only finitely many isolated switches happen.
So we can always choose $\Delta t_k = t_{k+1} - t_k$ small enough such that at
most one switch happens per time step.
The problem is not that a back and forth switch might happen
between $t_k$ and $t_{k+1}$ that we do not observe (because the optimum at
$t_{k+1}$ is again the same as $t_k$).
The problem is to distinguish the elements of the sets $\mathcal S(t_k)$ and
$\mathcal S(t_{k+1})$ in such a way that it is clear if one switch or no switch
has happened.

The elements of the sets $\mathcal S(t_k)$ and $\mathcal S(t_{k+1})$ have no
intrinsic order.
So for example $y^1(t_k)$ could be the first element of a list
representation of $\mathcal S(t_k)$ but $y^1(t_{k+1})$ the second element of
a list representation of $\mathcal S(t_{k+1})$.
The task of event detection comes down distinguishing elements in the list
representation.
Because then we can check whether $h(y^s(t_{k+1})) < h(y^1(t_{k+1}))$ for some
$s \not= 1$.

Let $\hat y^1_k = y^1(t_k)$ be the first element of $\mathcal S(t_k)$.
We want to determine what elements $\hat y^s$ for $s \in \{ 1, \dots, S \}$
could potentially be $\hat y^s = y^1(t_{k+1})$.
In order to do this we have to consider that $y^1(t) = y^1(x(t))$ is a locally
Lipschitz continuous implicit function of $x$.

\begin{proposition}
  Given $y^1(t_k)$ we can bound the values of $y^1(t_k + \Delta t_k)$ by
  a Lipschitz constant
  \begin{equation*}
    \| y^1(t_k + \Delta t_k) - y^1(t_k) \| \le L \| \Delta t_k \|
  \end{equation*}
  for some $L > 0$.
\end{proposition}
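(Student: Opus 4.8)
The plan is to establish this as a straightforward consequence of the chain rule together with the Lipschitz bounds already assumed on the problem data. The key observation is that along a single phase $(t^{e_j}, t^{e_{j+1}})$, the branch $y^1(t) = y^1(x(t))$ is given by composing the implicit function $x \mapsto y^1(x)$ with the trajectory $t \mapsto x(t)$, both of which we will show to be Lipschitz.

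First I would recall from the proposition preceding this one (proved via the implicit function theorem) that $\partial_x y^1(x) = \partial^2_{yy} h(x, y^1(x))^{-1} \partial^2_{yx} h(x, y^1(x))$. On a compact neighborhood of the trajectory segment, $\partial^2_{yy} h$ is bounded below by a positive constant (positive definiteness from the optimality assumptions, uniform by \Cref{prop:convexboxes} and compactness), so its inverse has bounded spectral norm; and $\partial^2_{yx} h$ is bounded above by \Cref{assump:Lipschitz}. Hence $\|\partial_x y^1(x)\| \le L_y$ for some $L_y > 0$, which makes $x \mapsto y^1(x)$ Lipschitz with constant $L_y$ on that neighborhood. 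Next I would bound the trajectory itself: since $(x^j)'(t) = f(x^j(t), y^j(t))$ and $f$ is continuous (indeed Lipschitz) hence bounded by some $M$ on the relevant compact set, we get $\|x(t_k + \Delta t_k) - x(t_k)\| \le M\,\Delta t_k$. Composing, $\|y^1(t_k + \Delta t_k) - y^1(t_k)\| \le L_y \|x(t_k + \Delta t_k) - x(t_k)\| \le L_y M\,\Delta t_k =: L\,\Delta t_k$, which is the claim (with $\|\Delta t_k\| = \Delta t_k$ for scalar time).

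The main obstacle, and the point requiring a little care, is justifying that the constants are uniform over the interval in question — i.e. that the trajectory $\{x(t) : t \in [t_k, t_{k+1}]\}$ stays inside a fixed compact set on which both $f$ is bounded and $\partial^2_{yy} h$ is uniformly positive definite. This follows because the phase is a bounded time interval, $f$ being Lipschitz gives a global (Gronwall-type) a priori bound on $\|x(t)\|$, and \Cref{prop:convexboxes} together with \Cref{assump:Lipschitz} supplies the uniform lower bound $\delta$ on the Hessian over a box around each optimum; one then takes $L$ as the resulting composite constant. A secondary subtlety is that the bound is only valid for $\Delta t_k$ small enough that the whole segment lies within a single phase (no event in between), which is consistent with the standing assumption that $\Delta t_k$ is chosen small enough that at most one switch occurs per step; strictly, the Lipschitz bound for the tracked branch $y^1$ applies on $[t_k, t^e)$ up to the event, which is all that is needed for event detection.
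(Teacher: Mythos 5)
Your proof is correct and follows essentially the same route as the paper: decompose $y^1(t)=y^1(x(t))$ into the Lipschitz trajectory $t\mapsto x(t)$ (bounded dynamics on a compact interval) composed with the Lipschitz implicit function $x\mapsto y^1(x)$ (implicit function theorem plus mean value theorem), and multiply the two constants. Your version is somewhat more explicit than the paper's about why the constants are uniform (bounding $\partial^2_{yy}h^{-1}$ via the uniform positive definiteness from \Cref{prop:convexboxes} and bounding $f$ on a compact set), but the underlying argument is the same.
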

\begin{proof}
  We have $x(t)$ is absolutely continuous due to Carathéodory's existence
  theorem \cite{Hale1980}.
  We are considering a compact time interval so absolute continuity implies
  Lipschitz continuity
  \begin{equation*}
    \| x(t_k + \Delta t_k) - x(t_k) \| \le L_1 \| \Delta t_k \| \; .
  \end{equation*}
  The implicit function $y(x)$ is Lipschitz continuous due to strong local
  convexity and implicit function theorem by mean value theorem
  \begin{equation*}
    y(x + \Delta x) - y(x) = \partial_x y(\xi) \Delta x \le \| \partial_x y(\xi) \| \Delta x
  \end{equation*}
  for some $\xi \in \{ x + \lambda \Delta x \;|\; \lambda \in [0, 1] \}$.
  With Lipschitz continuity of the implicit function $y(x)$ we get
  \begin{align*}
    \| y^1(t_k + \Delta t_k) - y^1(t_k) \| 
    &= \| y^1(x(t_k + \Delta t_k)) - y^1(x(t_k)) \| \\
    &= \| y^1(x(t_k) + x(t_k + \Delta t_k) - x(t_k)) - y^1(x(t_k)) \| \\
    &\le L_2 \| x(t_k + \Delta t_k) - x(t_k)) \| \\
    &\le L_2 L_1 \| \Delta t_k \| \; .
  \end{align*}
\end{proof}
The same idea holds in the discrete time setting when allowing for some error.
This is obvious for example when using an explicit Euler step where the change
between $x(t_{k+1})$ and $x(t_k)$ is directly given by the dynamics.
Instead of the real Lipschitz constant in $t$ we just use the derivative of
that time step $(y^1)'(t_k)$.

The event detection can be implemented into an integrator by verifying
\begin{align}
\|y^\ast(t_k) - y^\ast(t_{k+1})\| \leq \|\Delta t_k\| \cdot \|(y^\ast)'(t_k)\|\; ,
\label{eq:event_detection}
\end{align}
in every time step.
An event is detected if \cref{eq:event_detection} is violated.
In that case, it remains to find the event location.

\subsection{Event location}

We mix the event location procedure into the time stepping procedure via the
Mannshardt approach \cite{Mannshardt1978} that is compatible with our
assumptions.

Finding the event times $t^{e_j}$ involves numerically solving for the event
function roots.
We use Newton's method to solve $0 = H(\tilde x(t^{e_j}))$ for
$t^{e_j}$ where $\tilde x(t^{e_j})$ is a local numerical approximation of
the solution trajectory.
Differentiation of $\tilde x(t^{e_j})$ with respect to $t^{e_j}$ is
straight-forward so we need the gradient of the event function with respect
to $x$
\begin{align*}
  \partial_x H(x)  &= \partial_x h(x, y^1(x))
  + \partial_y h(x, y^1(x)) \partial_x y^1(x)\\
  &- \partial_x h(x, y^2(x))
  - \partial_y h(x, y^2(x)) \partial_x y^2(x) \; .
\end{align*}
The individual $\partial_x y^i(x)$ are computed by implicit function theorem
as seen above.
Note, that $y^2(t_{k+1})$ can be obtained directly from the DGO solver since
it is the global optimum, while $y^1(t_{k+1})$ needs to be identified in the
set of local optima $\mathcal S(t_{k+1})$.
This can be achieved by local optimization, i.e., solving
$$\partial_x h(x(t_{k+1},y^1(t_{k+1})) = 0\; ,$$
with initial guess
$$y^1(t_{k+1})= y^1(t_{k})+ \Delta t_k \cdot (y^\ast)'(t_k)\; .$$

\section{Numerical experiments}
\label{sec:NE}

We investigate two example DAEOs with global optima discontinuous in time.
The first example is a very basic example with two local optima for which an
analytical solution can be easily derived.
The second example has multiple local optima that are emerging and vanishing
over time.
We will compare numerical simulations with and without event detection.
Furthermore, we set the time step for the numerical integration to
$\Delta t=0.02$ and use the trapezoidal rule as explained above.
For the DGO part of the simulation, we use the solver described in
\cite{Deussen2019}.

\begin{figure*}[tb]
  \centering
\begin{tikzpicture}
[declare function={t(\p) = -ln(\p)/3.0;},
  declare function={
    xt(\x)= (\x<=-ln(0.5)/3.0) * exp(-3.0*\x)   +
     (\x>-ln(0.5)/3.0) * exp(-\x+2.0/3.0*ln(0.5));
  },
  declare function={
    yt(\x)= (\x<=-ln(0.5)/3.0) * 1   +
     (\x>-ln(0.5)/3.0) * -1;
  },
  declare function={
    dpx(\x)= (\x>-ln(0.5)/3.0) * exp(-\x+2.0/3.0*ln(0.5))*2.0/3.0/0.5;
  },
  declare function={
    h(\x,\t)= (1.0-\x*\x)*(1.0-\x*\x) - (xt(\t)-0.5)*sin(deg(\x*pi/2.0));
  },
  ]
\begin{axis}[width=0.45\linewidth, height=0.38\linewidth,
xmin=-1.4, xmax=1.4, xlabel=$y$, restrict x to domain=-1.4:1.4,
ymin=0, ymax=1.0, ylabel=$t$, restrict y to domain=0:1.0,
zmin=-1.0, zmax=1.2, zlabel={$h(x(t),y(t))$}, restrict z to domain=-1.0:1.2,
y tick label style={yshift=0.3cm},
extra y ticks = {0.23104906}, extra y tick labels = {$t^e$},
extra y tick style={grid=major,ticklabel pos=upper},
y label style={yshift=0.7cm},
z label style={xshift=0.5cm, yshift=-0.35cm},
view={9}{22}
]
  \addplot3[mesh, color=black, domain=-1.4:1.4, y domain=0:1.0, samples=30, samples y=20, thin] (x, y, {h(x,y)});
  \addplot3[color=Color4, densely dashed, domain=0:0.23104906, samples y=1, smooth, ultra thick] (-1.0, x, {h(-1,x)});
  \addplot3[color=Color2, densely dashed, domain=0.23104906:1, samples y=1, smooth, ultra thick] (-1.0, x, {h(-1,x)});
\addplot3[color=Color0, domain=0:0.23104906, samples=2, samples y=1, ultra thick] (1,x,-1);
\addplot3[color=Color0, domain=0.23104906:1, samples=2, samples y=1, ultra thick] (-1,x,-1);
  \addplot3[color=Color2, domain=0:0.23104906, samples y=1, smooth, ultra thick] (1.0, x, {h(1,x)});
  \addplot3[color=Color4, domain=0.23104906:1, samples y=1, smooth, ultra thick] (1.0, x, {h(1,x)});
\end{axis}
\begin{scope}[xshift=0.5\linewidth]
\begin{axis}[width=0.48\linewidth, height=0.38\linewidth,
xmin=0, xmax=1, xlabel=$t$, restrict x to domain=0.0:1.0,
ymin=-1.9, ymax=1.9, restrict y to domain=-1.7:1.7,
extra x ticks = {0.23104906}, extra x tick labels = {$t^e$},
extra x tick style={grid=major,ticklabel pos=upper},
legend style={font=\small, fill=none,draw=none}, legend columns=2,
x tick label style={yshift=-0.5ex},
y tick label style={xshift=-0.5ex}
]
  \addplot[color=Color2, domain=0:0.23104906, samples=20, smooth, very thick] (x, {h(1,x)});
  \addlegendentry{}
  \addplot[color=Color4, domain=0.23104906:1, samples=60, smooth, very thick] (x, {h(1,x)});
  \addlegendentry{$h(x(t),y^1(t))$}
  \addplot[color=Color4, domain=0:0.23104906, samples=20, densely dashed, smooth, very thick] (x, {h(-1,x)});
  \addlegendentry{}
  \addplot[color=Color2, domain=0.23104906:1, samples=60, densely dashed, smooth, very thick] (x, {h(-1,x)});
  \addlegendentry{$h(x(t),y^2(t))$}
  \addplot[fill=none,draw=none,opacity=0] coordinates {(0.2,0.2)};
  \addlegendentry{};
  \addplot[color=Color0, domain=0:0.23104906, samples=2, thick] {1};
  \addplot[color=Color0, domain=0.23104906:1, samples=2, thick] {-1};
  \addlegendentry{$y(t)$};
\end{axis}
\end{scope}
\end{tikzpicture}
\caption{The left image shows a surface plot of $h(x(t),y(t))$ for $t\in[0,1]$.
The position $y(t)$ of the global optimum which lies at $y^1=1$ for $t<t^e$ and
at $y^2=-1$ for $t>t^e$ is marked by the red line while the green lines indicate
its value.}
\label{fig:ex1_h}
\end{figure*}
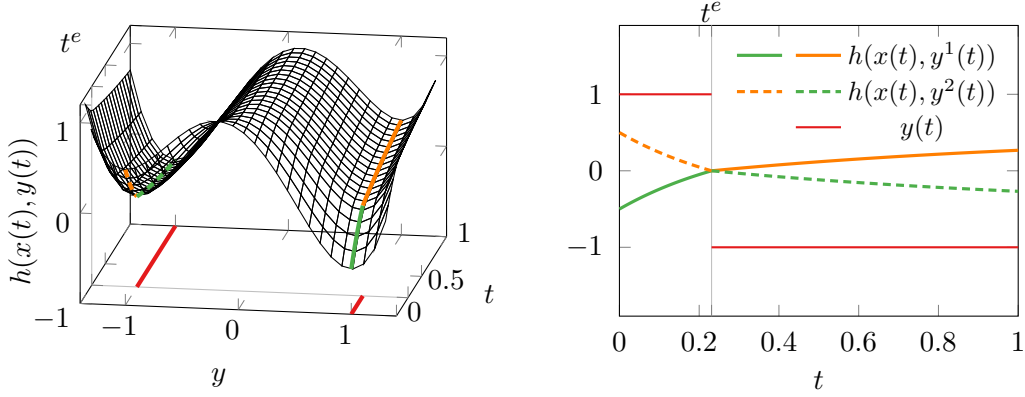
\begin{example} \label{ex:1}
As a first example, we consider
  \begin{align*}
    x(0) &= 1\\
    x'(t) &=  -\left(2+y(t)\right)x(t) \\
    \{ y(t) \} &= \argmin_y \; h(x(t),y(t)) \; ,
  \end{align*}
with $T=1$ and 
\[h(x(t),y(t)) = \left(1 -(y(t))^2\right)^2 - \left(x(t)-\frac12\right)\sin\left(y(t)\frac\pi2\right)\; .\]
Function $h(x(t),y(t))$ has two local optima at time-independent positions
$y^1(t)=1$ and $y^2(t)=-1$, i.e., $\partial_t y^1(t) = 0$
and $\partial_t y^2(t) = 0$.
A surface plot of $h(x(t),y(t))$ is shown in \Cref{fig:ex1_h}.
Finding the root of the event function, i.e.,
\[H(x(t))=h(x(t),y^1(t))-h(x(t),y^2(t))=0\; ,\]
yields event location at $x(t)=0.5$.
The solution of the DAEO is
\[x(t)=\exp(-3t)\;,\ t\in[0,t^e]\;,\]
for the first phase
\begin{align*}
  x(0) &= 1\\
  x'(t) &= -3x(t)\; .
\end{align*}
Thus, the event location is at $t^e=-\log(0.5)/3$.
The second phase is described by
\begin{align*}
  x(t^e) &= 0.5\\
  x'(t) &= -x(t)\; ,
\end{align*}
with solution
\[x(t)=\exp\left(-t+\frac23\log(0.5)\right)\;,\ t\in[t^e,1]\; .\]

\end{example}
\begin{figure*}[htb]
  \centering
\begin{tikzpicture}[
  declare function={
    xt(\x)= (\x<=-ln(0.5)/3.0) * exp(-3.0*\x)   +
     (\x>-ln(0.5)/3.0) * exp(-\x+2.0/3.0*ln(0.5));
  },
  declare function={
    yt(\x)= (\x<=-ln(0.5)/3.0) * 1   +
     (\x>-ln(0.5)/3.0) * -1;
  },
  declare function={
    dpx(\x)= (\x>-ln(0.5)/3.0) * exp(-\x+2.0/3.0*ln(0.5))*2.0/3.0/0.5;
  }
]
\begin{axis}[width=0.4\linewidth, height=0.35\linewidth,
xmin=0, xmax=1, xlabel=$t$,
ymin=0, ymax=1,
extra x ticks = {0.23104906}, extra x tick labels = {$t^e$},
extra x tick style={grid=major,ticklabel pos=upper},
legend style={font=\small, fill=none,draw=none}, legend columns=1,
x tick label style={yshift=-0.5ex},
y tick label style={xshift=-0.5ex}
]
  \addplot[color=Color1, domain=0:1, samples=500, ultra thick] {xt(x)};
  \addlegendentry{$x(t)$};
  \addplot[color=Color4, mark=+, only marks, mark repeat=4,mark size=3.6pt, very thick] table [col sep=comma] {data/ex1_noeh.txt};
  \addlegendentry{$\hat x(t)^-$}
    \addplot[color=black, mark=x, only marks, mark repeat=4,mark size=3.6pt, very thick] table [col sep=comma] {data/ex1_eh.txt};
  \addlegendentry{$\hat x(t)^+$}
\end{axis}
\begin{scope}[xshift=0.5\linewidth]
\begin{axis}[width=0.5\linewidth, height=0.35\linewidth,
xmin=1e-6, xmax=0.1, xmode=log, xlabel=$\Delta t$,
xminorticks=false,
ymin=1e-13, ymax=1, ymode=log, ylabel=$\Delta t \|\hat \cdot- \cdot\|_1$,
legend style={font=\small, at={(1.02,-0.02)},
anchor=south east,fill=none,draw=none}, legend columns=1,
x tick label style={yshift=-0.5ex},
y tick label style={xshift=-2.5ex}
]
  \addplot[color=black, domain=1e-6:1, samples=2, dashdotted, thick] {x};
  \addlegendentry{$\mathcal O(\Delta t)$};
  \addplot[color=Color4, mark=+, mark size=3.6pt, very thick] table [x=dt, y=x1] {data/conv_noeh.txt};
  \addlegendentry{$x(t)^-$}
  \addplot[color=black, domain=1e-6:1, samples=2, densely dashed, thick] {x^2};
  \addlegendentry{$\mathcal O(\Delta t^2)$};
  \addplot[color=black, mark=x, mark size=3.6pt, very thick] table [x=dt, y=x1] {data/conv_eh.txt};
  \addlegendentry{$x(t)^+$}
\end{axis}
\end{scope}
\end{tikzpicture}
\caption{Analytical (line) and numerical (marks) results for differential
variable $x(t)$ of \Cref{ex:1} on the left.
The differential equation is solved by trapezoidal rule with $\Delta t=0.02$
without (superscript $-$) and with (superscript $+$) explicit treatment of events.
The convergence behavior of the two methods is shown on the right.}
\label{fig:ex1_x}
\end{figure*}
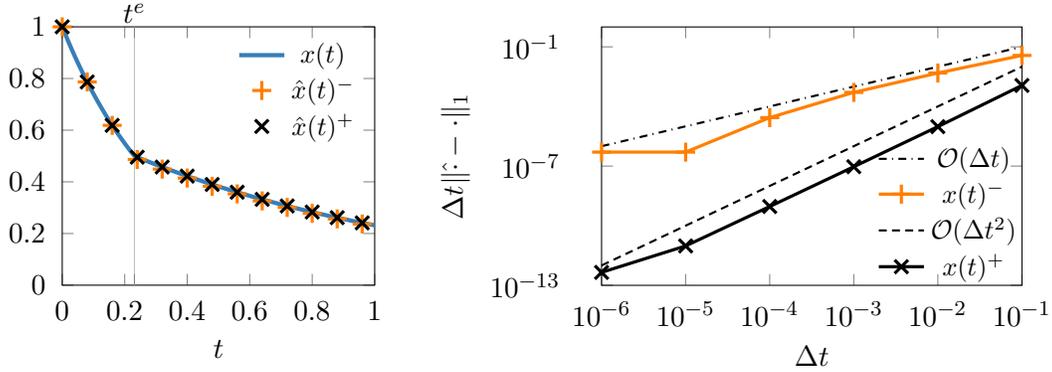

On the left side of \Cref{fig:ex1_x}, the results of the differential
variable $x(t)$ are shown.
The blue line represents the analytical solution, while the orange circles
mark the numerical simulation without event detection and the black crosses
mark the one with event detection (and location).
It becomes visible that the differential variable is nonsmooth at the event.
On the right side of \Cref{fig:ex1_x} the convergence behavior of the numerical
simulations with (black, crosses) and without (orange, circles) explicit
treatment of the event is plotted for decreasing time steps.
It can be seen that the version with explicit treatment converges quadratically
to the analytical solution, while the version without explicit treatment only
converges linearly.
Computing the event location and treating this explicitly in the simulation
is only possible due to the tracking and detection of switches in the global
optimum.

\begin{figure*}[htb]
  \centering
\begin{tikzpicture}
[declare function={t(\p) = -ln(\p)/3.0;},
  declare function={
    xt(\x)= (\x<=-ln(0.5)/3.0) * exp(-3.0*\x)   +
     (\x>-ln(0.5)/3.0) * exp(-\x+2.0/3.0*ln(0.5));
  },
  declare function={
    yt(\x)= (\x<=-ln(0.5)/3.0) * 1   +
     (\x>-ln(0.5)/3.0) * -1;
  },
  declare function={
    dpx(\x)= (\x>-ln(0.5)/3.0) * exp(-\x+2.0/3.0*ln(0.5))*2.0/3.0/0.5;
  },
  declare function={
    h(\x,\t)= (\x-1)*(\x-1) + sin(deg(4*\x))+1;
  },
  ]
\begin{axis}[width=0.40\linewidth, height=0.35\linewidth,
xmin=0, xmax=6, xlabel=$y$, restrict x to domain=0:6,
ymin=0, ymax=2, ylabel=$t$, restrict y to domain=0:2,
zmin=-1, zmax=20, zlabel={$h(x(t),y(t))$}, restrict z to domain=-1:30,
extra y ticks = {0.589331037698447, 1.16042284547823, 1.5238123362291, 1.79029462182903},
extra y tick labels = {$t^{e_1}$, $t^{e_2}$, $t^{e_3}$, $t^{e_4}$},
extra y tick style={grid=major,ticklabel pos=upper,ticklabel style={xshift=-0.15cm,yshift=-0.07cm}},
x label style={yshift=0.3cm},
y label style={yshift=0.5cm},
view={-21}{32},
colormap={custom}{color(0)=(black!100) color(1)=(black!20)}
]
  \addplot3[mesh, point meta=\thisrow{t}, shader=interp] table [col sep=comma, x=y, y=t, z=h] {data/ex2.txt};
  \addplot3[color=Color0, very thick] table[col sep=comma, x=y, y=t, z=h] {data/ex2_eh.txt};
\end{axis}
\begin{scope}[xshift=0.4\linewidth]
\begin{axis}[width=0.55\linewidth, height=0.35\linewidth,
xmin=0, xmax=2, xlabel=$t$, restrict x to domain=0:2,
ymin=0, ymax=8, restrict y to domain=0:8,
extra x ticks = {0.589331037698447, 1.16042284547823, 1.5238123362291, 1.79029462182903},
extra x tick labels = {$t^{e_1}$, $t^{e_2}$, $t^{e_3}$, $t^{e_4}$},
extra x tick style={grid=major,ticklabel pos=upper},
legend style={font=\small, fill=white,draw}, legend columns=2,
legend pos=north west,
x tick label style={yshift=-0.5ex},
y tick label style={xshift=-0.5ex}
]

  \addplot[color=Color1, very thick, dashed] table [col sep=comma] {data/ex2_noeh.txt};
  \addlegendentry{$\hat x(t)^-$}
  \addplot[color=Color1, very thick] table [col sep=comma ] {data/ex2_eh.txt};
  \addlegendentry{$\hat x(t)^+$}
    \addplot[color=Color0, very thick, dashed] table [col sep=comma, y=y] {data/ex2_noeh.txt};
  \addlegendentry{$\hat y(t)^-$}
    \addplot[color=Color0, very thick] table [col sep=comma, y=y] {data/ex2_eh.txt};
  \addlegendentry{$\hat y(t)^+$}
    \addplot[color=Color2, very thick, dashed] table [col sep=comma, y=h] {data/ex2_noeh.txt};
  \addlegendentry{$\hat h^-$}
    \addplot[color=Color2, very thick] table [col sep=comma, y=h] {data/ex2_eh.txt};
  \addlegendentry{$\hat h^+$}

\end{axis}
\end{scope}
\end{tikzpicture}
\caption{Surface plot of the 1D Griewank inspired function from example \Cref{ex:2}
with global optimizer (red line) is shown on the left. The right side shows
differential variable $x(t)$ (blue), global optimizer $y(t)$ (red) and value of
the global optimum (green) for the version with (solid) and without (dashed)
explicit treatment of events.}
\label{fig:ex2_h}
\end{figure*}
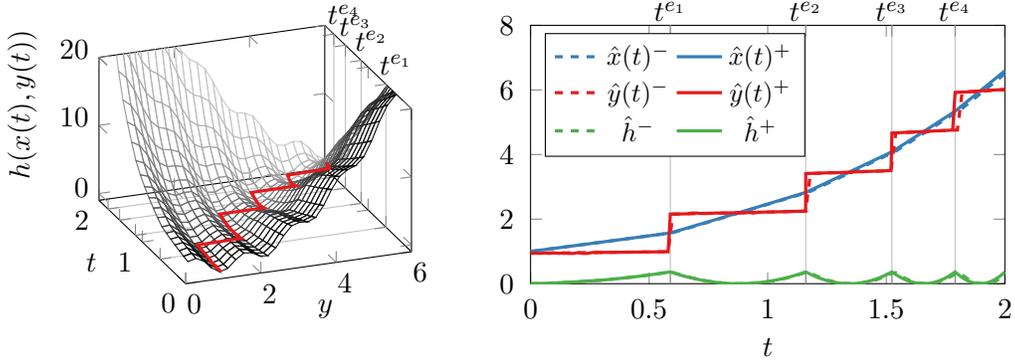

\begin{example} \label{ex:2}
The second DAEO considers a 1D Griewank inspired function \cite{Griewank1981}
as embedded optimization problem.
The DAEO is described by
  \begin{align*}
    x(0) &= 1 \\
    x'(t) &= y(t) \\
    \{ y(t) \} &= \argmin_y \; (y(t) - x(t))^2 + \sin(C y(t))\; ,
  \end{align*}
with $C=5$ and $T=2$.
The objective function has multiple optima that are emerging and vanishing over
time.
A surface plot of the function with the global optimizer $y(t)$ (red line)
is given on the left side of \Cref{fig:ex2_h}.
It can be seen that there are four events on the time interval $t\in[0,2]$.
The differentiable variable $x(t)$ (blue), the global optimizer $y(t)$ (red)
and the global optimum $h(x(t),y(t))$ (green) are represented as lines in the
right graph of \Cref{fig:ex2_h}.
The numerical results of the version without (dashed) explicit treatment of
the event differ slightly from the result with explicit treatment.
This is a result of the error propagation due to the behavior of the dynamic
system.
\end{example}

\section{Conclusions and further work}
\label{sec:C}
We introduced event detection and event location for a jumping global optimizer.
The explicit treatment of events and its implementation into the numerical
integrator yield a second-order convergent method for DAEOs in the presence of
a jumping global optimizer.
The second-order integrator enables the computation of discrete tangent and
adjoint sensitivities of the dynamic system with respect to some parameters,
which is crucial for solving optimal control problems of DAEOs.
These sensitivities can now be obtained by AD methods.

Due to the high computational cost that comes along with the application of DGO
in every time step, we aim for tracking relevant local optima in time instead.
The global search is only required for getting a list of local optima and for
recognizing that a new local optimum emerged during a time step.
Vanishing optima should not be a problem with this approach.

\section*{Acknowledgement}
This work was supported by the German Research Foundation (DFG) under grant number
NA487/8-2.

%

\end{document}